\theoremstyle{plain}
\newtheorem{theorem}[equation]{Theorem}
\newtheorem{proposition}[equation]{Proposition}
\newtheorem{lemma}[equation]{Lemma}
\newtheorem{corollary}[equation]{Corollary}
\newtheorem{definition}[equation]{Definition}
\theoremstyle{remark}
\theoremstyle{remark}
\newtheorem{remark}[equation]{Remark}
\numberwithin{equation}{section}
\newcommand{\abs}[1]{\left\vert#1\right\vert}
\newcommand{\norm}[1]{\left\Vert#1\right\Vert}
\newcommand{\xdownarrow}[1]{%
  {\left\downarrow\vbox to #1{}\right.\kern-\nulldelimiterspace}
}
\newcommand*{\dashdownarrow}{%
  \mathrel{%
    \mathpalette\dasharrow@vert{-90}%
  }%
}
\newcommand*{\dashuparrow}{%
  \mathrel{%
    \mathpalette\dasharrow@vert{90}%
  }%
}
\newcommand*{\dasharrow@vert}[2]{%
  \sbox0{$#1\vcenter{}$}%
  \sbox2{$#1\dashrightarrow\m@th$}%
  \dimen@=1.2\dimexpr\ht2-\ht0\relax
  \sbox2{\raisebox{-\ht0}{\unhcopy2}}%
  \ht2=\z@
  \dp2=\z@
  \vcenter{\hbox to 2\dimen@{\hfill\rotatebox{#2}{\box2}\hfill}}%
}
\newcommand{\cf}{{\mathcal F}}
\newcommand{\co}{{\mathcal O}}
\newcommand{\cR}{{\mathcal R}}
\newcommand{\C}{{\mathbb C}}
\newcommand{\N}{{\mathbb N}}
\newcommand{\R}{{\mathbb R}}
\newcommand{\T}{{\mathbb T}}
\newcommand{\Z}{{\mathbb Z}}
\newcommand{\overbar}[1]{\mkern 1.5mu\overline{\mkern-1.5mu#1\mkern-1.5mu}\mkern 1.5mu}
\begin{document}

\title[Convergence in $A^1$]{Norm convergence of partial sums of $H^1$ functions}
\author{J. D. McNeal \& J. Xiong}
\subjclass[2010]{32W05}
\begin{abstract}
A classical observation of Riesz says that truncations of a general $\sum_{n=0}^\infty a_n z^n$ in the Hardy space $H^1$ do not converge in $H^1$. 
A substitute positive result is proved: these partial sums always converge in the Bergman norm $A^1$. The result is extended to complete Reinhardt domains in $\C^n$.
A new proof of the failure of $H^1$ convergence is also given.
 \end{abstract}
\address{Department of Mathematics, \newline The Ohio State University, Columbus, Ohio, USA}
\email{mcneal@math.ohio-state.edu}
\address{Department of Mathematics, \newline The Ohio State University, Columbus, Ohio, USA}
\email{xiong.147@osu.edu}

\maketitle


\section{Introduction}\label{S:intro}

Let $U\subset\C$ be the unit disc and $\co(U)$ denote the set of holomorphic functions on $U$. If $f\in\co(U)$, with power series $f(z)=\sum_{k=0}^\infty a_k z^k$, then
\begin{equation*}
S_Nf(z) := \sum_{k=0}^N a_k z^k \longrightarrow f(z)
\end{equation*}
uniformly on compact subsets of $U$. If $\big(X, \|\cdot\|_X\big)\subset\co(U)$ is a Banach space of functions, it is natural to ask whether $S_N f$ also converges 
to $f$ in the norm $\|\cdot\|_X$. 

Two classically studied  spaces, the Bergman and Hardy spaces, will be considered here. For $p>0$, the Bergman space $A^p(U)$ is the set of $f\in\co (U)$  such that
\begin{equation*}
\left\|f\right\|^p_{A^p(U)} = \int_U |f|^p\, dV <\infty,
\end{equation*}
$dV$ denoting Lebesgue measure on $\C$. The Hardy space $H^p(U)$ is the set of $f\in\co (U)$ such that
\begin{equation}\label{E:hardy}
\left\|f\right\|^p_{H^p(U)} =\sup_{0\leq r <1}\frac 1{2\pi} \int_{0}^{2\pi} \left|f(r e^{i\theta})\right|^p\, d\theta <\infty,
\end{equation}
$d\theta$ denoting Lebesgue measure on $[0, 2\pi]$. For $1\leq p<\infty$, $A^p(U)$ and $H^p(U)$ are Banach spaces. 

When $p=2$, norm convergence of $S_Nf$ in either the Bergman or Hardy norm is elementary. For $f\in H^2(U)$, orthogonality of $\left\{e^{ik\theta}\right\}$ on $\partial U$ shows $\|f\|^2_{H^2(U)} =\sum_{k=0}^\infty |a_k|^2$. Orthogonality also shows
$\left\|S_N f-f\right\|^2_{H^2(U)} =\sum_{k=N+1}^\infty\left| a_k\right|^2$,
which tends to $0$ as $N\to\infty$. Minor modifications of the argument hold when $A^2(U)$ replaces $H^2(U)$. When $1<p<\infty$ and $p\neq 2$, the result is the same as in the Hilbert space case but proving this 
is no longer elementary. Norm convergence of $S_N f$ in $H^p(U)$ for $1<p<\infty$ is considered classical; a proof is contained in \cite{Garnettbook} on pages 104--110. Convergence of $S_N f$ in $A^p(U)$ for the same range of $p$ is established by Zhu \cite{Zhu91}, utilizing the result on $H^p(U)$.

The focus in this paper is $p=1$. Our interest in this case stems from the widespread occurrence of $L^1$ holomorphic functions, not as an endpoint consideration.
For $A^1(U)$ and $H^1(U)$, it is known that partial sum approximation fails; this is also addressed in  \cite{Garnettbook} and \cite{Zhu91}. For $A^1(U)$, \cite{Zhu91} gives an explicit family of functions $g_\alpha\in A^1(U)$, $\alpha\in U$, such that $\left\|S_N g_\alpha\right\|_{A^1}$ is not bounded uniformly in $\alpha$ and $N$. The fact that $\left\|S_N f-f\right\|_{A^1}\not\to 0$ for all $f\in A^1(U)$ then follows from the uniform boundedness principle. 
For $H^1(U)$, the proofs in print are somewhat oblique. In \cite{Garnettbook},  it is first shown that $S_Nf$ approximating $f$ in $H^1$ is
equivalent to $h^1$ boundedness of the harmonic conjugation operator, where $h^1(U)$ denotes the harmonic functions with the norm $\|\cdot\|_{H^1}$. Both properties are also shown to be equivalent to the $L^1$ boundedness of the Szeg\H o projection on $U$.
The fact that harmonic conjugation is not bounded on $h^1(U)$ -- evidenced, e.g., by the Poisson kernel, as discussed in  \cite{Garnettbook} and \cite{Durenbook} --  then implies that partial sum approximation on $H^1(U)$ fails.

The role of harmonic conjugation in this argument does not readily generalize to domains in $\C^n$ or to non-simply connected domains in the plane. Consequently a new proof of the failure of $H^1$-approximation of partial sums,  in the spirit of \cite{Zhu91}, is given in the Section \ref{SS:failure}.

Using polar coordinates, it is easy to see $H^p(U)\subset A^p(U)$.  The main purpose of this paper is a substitute positive result for the failure of $H^1$ partial sum approximation: partial sums of $f\in H^1(U)$ are norm convergent, but in the weaker norm $A^1(U)$.

\begin{theorem}\label{T:main}
If $f(z)=\sum_{k=0}^\infty a_k z^k\in H^1(U)$ and $S_Nf(z) = \sum_{k=0}^N a_k z^k$, then

$$\left\|S_Nf-f\right\|_{A^1(U)}\longrightarrow 0\qquad\text{as } N\to\infty.$$

\end{theorem}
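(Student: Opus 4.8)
The plan is to reduce the $A^1$ convergence of $S_N f$ to an $L^1(U)$ estimate that, unlike the $H^1$ situation, is actually available. The key point is that on the Bergman side the relevant operator has an extra gain coming from integrating in the radial direction: $\|g\|_{A^1(U)} = \int_0^1 \big(\int_0^{2\pi} |g(re^{i\theta})|\, d\theta\big)\, r\, dr$, so $\|S_N f - f\|_{A^1(U)} = \int_0^1 \|S_N f_r - f_r\|_{L^1(\partial U)}\, r\, dr$, where $f_r(e^{i\theta}) := f(re^{i\theta})$.

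First I would record the elementary facts: for each fixed $r<1$, $f_r$ is smooth on $\partial U$, its Fourier series is $\sum_k a_k r^k e^{ik\theta}$, and $S_N f_r$ is exactly the $N$-th Fourier partial sum of $f_r$. Thus $S_N f_r - f_r = (D_N - I) * f_r$ up to the usual conjugate-function correction, but more usefully one can write $S_N f_r = f_r * D_N$ with $D_N$ the Dirichlet kernel, and the obstruction to $L^1(\partial U)$ convergence is precisely that $\|D_N\|_{L^1} = \|S_N\|_{L^1 \to L^1} \sim \log N$. So pointwise in $r$ there is no convergence for general $f_r \in L^1$; the gain must come from the $r$-integration. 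Second, I would split the radial integral at a scale depending on $N$, say at $r = 1 - 1/N$ (or $1-\delta$ then let $N\to\infty$ then $\delta\to 0$). On the inner region $r \le 1-1/N$, use the crude bound $\|S_N f_r - f_r\|_{L^1(\partial U)} \le C \log N \cdot \|f_r\|_{L^1(\partial U)} \le C (\log N)\, \|f\|_{H^1(U)}$, but also observe that on $r \le 1-1/N$ one should instead exploit smoothness: $f_r = f_\rho * P_{r/\rho}$ for $\rho$ slightly larger, so $f_r$ is band-limited-like and $\|S_N f_r - f_r\|_{L^1}$ decays. The cleaner route is: for fixed $r<1$, $S_N f_r \to f_r$ in $L^1(\partial U)$ as $N\to\infty$ because $f_r$ is continuous (indeed real-analytic) on $\partial U$, and $\|S_N f_r - f_r\|_{L^1(\partial U)} \le C\,\|f_r\|_{L^1(\partial U)}\,\frac{\log N}{?}$ — here I need a quantitative rate. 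The honest estimate I would use is $\|S_N f_r - f_r\|_{L^1(\partial U)} \le C \sum_{k>N} |a_k| r^k + (\text{tail of conjugate})$; combined with $|a_k| \le \|f\|_{H^1} $ (coefficient bound) this gives $\le C \|f\|_{H^1} \frac{r^{N+1}}{1-r}$, which is summable-nice: $\int_0^1 \frac{r^{N+1}}{1-r}\, r\, dr$ — but that integral diverges, so this naive bound is too lossy near $r=1$.

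So the real argument must treat the region $r$ near $1$ separately using the $H^1$ hypothesis genuinely, not just coefficient bounds. Here is the mechanism I expect to work: write $S_N f_r - f_r$ and use that $\{f_r : r<1\}$ is a \emph{uniformly integrable} family in $L^1(\partial U)$ — indeed $f_r \to f^*$ in $L^1(\partial U)$ as $r\to 1$, where $f^*$ is the boundary function, because $f\in H^1$. Uniform integrability plus the fact that the Dirichlet projections $S_N$, while unbounded on $L^1$, are \emph{equicontinuous on norm-compact subsets} — no; rather, the correct tool is that $S_N g \to g$ in $L^1(\partial U)$ for each fixed $g\in L^1(\partial U)$ lying in $H^1$ of the circle? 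That is false in general (that's the whole point of the $H^1$-failure). The resolution is the averaging in $r$: by Fubini and a change of variables, $\int_0^1 \|S_N f_r - f_r\|_{L^1(\partial U)}\, r\, dr$ can be rewritten, after expanding $S_N f_r$ via the Dirichlet kernel and using $f_r = f * P_r$ (Poisson), as an integral against the kernel $\int_0^1 |D_N * P_r - P_r| \, r\,dr$ acting on $f^*\in L^1$, and the point is that $\int_0^1 (D_N * P_r)\, r\, dr$ is a \emph{bounded} convolution kernel with bound independent of $N$ — precisely because the $\log N$ blow-up of $\|D_N * P_r\|_{L^1}$ happens only for $r$ within $1/N$ of the boundary, a set of $r$-measure $O(1/N)$, so $\int_0^1 \|D_N * P_r\|_{L^1}\, r\, dr \le C$ uniformly. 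This uniform bound, together with pointwise (in $r$, and after integration) convergence on the dense subset of polynomials, yields the theorem by an $\varepsilon/3$ argument.

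Concretely, the steps in order: (i) reduce to $\|S_N f - f\|_{A^1} = \int_0^1 \|S_N f_r - f_r\|_{L^1(\partial U)}\, r\, dr$ and note $S_N f_r = P_r * (S_N f^*)$ is \emph{not} quite right, so instead write $S_N f_r$ directly from the coefficients and compare with $f_r$; (ii) prove the uniform operator bound $\sup_N \int_0^1 \|S_N\|_{L^1(\partial U)\to L^1(\partial U),\ \text{weighted by } P_r} \, r\, dr < \infty$, i.e. that the "vertically averaged Dirichlet kernel" $\int_0^1 |(D_N - \delta) * P_r|\, r\, dr$ has uniformly bounded $L^1(\partial U)$-norm — this is the crux and is where the $\log N$ is defeated by the $1/N$-thin bad set; (iii) verify convergence on polynomials (trivial, since $S_N$ is eventually the identity there) and on the dense set $H^1 \cap C^\infty$, then pass to general $f\in H^1$ by density and the uniform bound from (ii). The main obstacle is step (ii): getting the clean statement that $\int_0^1 \|D_N * P_r\|_{L^1(\partial U)}\, r\, dr \le C$ independent of $N$, for which I would split at $r = 1-1/N$, use $\|D_N * P_r\|_{L^1} \le \|D_N\|_{L^1} \le C\log N$ on the thin outer shell $1-1/N < r < 1$ (contributing $\int_{1-1/N}^1 C\log N\, dr = O(N^{-1}\log N)\to 0$), and on the inner region use the smoothing $\|D_N * P_r\|_{L^1} \le C\log\frac{1}{1-r}$ or better $\le C$ via the decay of the Poisson kernel's Fourier coefficients $r^{|k|}$ against the Dirichlet kernel, so that $\int_0^{1-1/N} C\log\frac{1}{1-r}\, dr \le C$. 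Assembling these gives the uniform bound, and the theorem follows.
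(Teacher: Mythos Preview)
Your proposal is correct and lands on the same overall architecture as the paper --- prove a uniform operator bound $\sup_N \|S_N\|_{H^1\to A^1}<\infty$ and then invoke density of polynomials --- but the mechanism you use for the uniform bound is genuinely different. The paper represents the coefficients by the Cauchy formula on a circle $\partial U_R$ with $R<1$, sums the geometric series to get the kernel $\dfrac{1-(z/\xi)^{N+1}}{\xi-z}$, bounds the numerator trivially by $2$ since $|z/\xi|<1$, and then uses only that $\displaystyle\int_U \frac{dV(z)}{|\xi-z|}$ is finite; no splitting, no boundary values, no Dirichlet kernel. Your route instead passes to the boundary function $f^*\in L^1(\partial U)$, writes $S_N f_r - f_r = (D_N*P_r - P_r)*f^*$, and controls $\int_0^1 \|D_N*P_r\|_{L^1}\,r\,dr$ by splitting at $r=1-1/N$. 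Two remarks on the trade-offs: the paper's argument is more elementary (it never invokes the existence of $H^1$ boundary values) and transports verbatim to complete Reinhardt domains in $\C^n$, which is where the paper is headed; your argument, once the meandering is pruned, actually yields a quantitative rate $\|S_N f - f\|_{A^1}\lesssim \|f\|_{H^1}\cdot \dfrac{\log N}{N}$ (from the tail estimate $\|D_N*P_r - P_r\|_{L^1}\lesssim r^{N+1}\log\frac{1}{1-r}$ and $\int_0^1 r^{N+2}\log\frac{1}{1-r}\,dr\asymp \frac{\log N}{N}$), which makes step (iii) superfluous. One small correction: your ``or better $\leq C$'' claim for $\|D_N*P_r\|_{L^1}$ on $r\leq 1-1/N$ is false (the bound there is $\lesssim 1+\log\frac{1}{1-r}$, not uniformly $O(1)$), but the integrated version you actually need is fine.
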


In the Section \ref{S:several}, Theorem \ref{T:main} is extended to give an analogous result on complete bounded Reinhardt domains $\cR\subset\C^n$.

There are other substitutes for the failure of $H^1(U)$ partial sum approximation in the literature: \cite{Sisk87,Sisk90,SiskAle95} on the $H^1(U)$ boundedness of the Ces\`aro operator, \cite{Stegenga76,PapVir08} on certain Toeplitz and Hankel operators,  \cite{Sisk01,Lif13} on boundedness of  the Hausdorff operator with particular choices of Borel measure, and 
\cite{PavNow10} on boundedness of the Libera operator from $H^1(U)$ to $H^p(U)$ with $0<p<1$. Unlike Theorem \ref{T:main}, these results involve modifications of $S_Nf$.

As notational shorthand, $\abs{a}\lesssim \abs{b}$ will mean there exists a constant $C>0$ such that $\abs{a}\leq C \abs{b}$, with $C$ independent of specified parameters.
Let $\abs{a}\approx \abs{b}$ mean both $\abs{a}\lesssim \abs{b}$ and  $\abs{b}\lesssim \abs{a}$ hold.

\section{The result on $U$}\label{S:one}

For $h(z)=\sum_{k=0}^\infty a_k z^k \in\co(U)$ and $N\in \Z^+$, let $S_Nh(z)=\sum_{k=0}^N a_k z^k$ denote the $N$-th partial sum of the power series of $h$.
Since power series are unique, call these polynomials {\it partial sums of} $h$ for short.

\subsection{Failure of norm convergence in $H^1(U)$}\label{SS:failure} A family of integral estimates is used in the proof of Theorem \ref{T:fail} below.
\begin{lemma}\label{calculus lemma}
For $z\in U$ and $c$ real, define
\[I_c(z)=\int_{0}^{2\pi}\frac{1}{\abs{1-ze^{-i\theta}}^{1+c}}\, d\theta. \]
If $c<0$ then $I_c \in L^\infty (U)$. Furthermore,  
\[I_c(z)\approx \frac{1}{(1-\abs{z}^2)^c} \text{ if } c>0\text{ and }\,\, I_{0}(z)\approx \log\frac{1}{1-\abs{z}^2}, \]
for constants independent of $z\in U$.
\end{lemma}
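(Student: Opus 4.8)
The plan is to reduce the lemma to a one-variable calculus estimate using the rotational symmetry of $I_c$. Writing $z=\abs{z}e^{i\vp}$ and substituting $\theta\mapsto\theta+\vp$ shows $I_c(z)=I_c(\abs{z})$, so it suffices to estimate $I_c(r)$ for $r\in[0,1)$. For such $r$ one has the exact identity $\abs{1-re^{-i\theta}}^2=(1-r)^2+4r\sin^2(\theta/2)$; combining it with $\tfrac{4}{\pi^2}\theta^2\le 4\sin^2(\theta/2)\le\theta^2$ on $\abs\theta\le\pi$, and separating $\tfrac12\le r<1$ (where $r\approx 1$) from $0\le r\le\tfrac12$ (where the denominator is bounded above and below by positive constants), one gets, after folding $[0,2\pi]$ onto $[-\pi,\pi]$ and using evenness,
\[
I_c(r)\;\approx\;\int_0^\pi\frac{d\theta}{\big((1-r)^2+\theta^2\big)^{(1+c)/2}}\ \ \text{for }\tfrac12\le r<1,\qquad I_c(r)\approx 1\ \ \text{for }0\le r\le\tfrac12,
\]
with constants depending only on $c$. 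The second relation already matches each asserted right-hand side on $0\le r\le\tfrac12$ (reading $\log\tfrac1{1-r^2}$ up to an additive constant near $r=0$), so the real work is to estimate $J_c(\delta):=\int_0^\pi(\delta^2+\theta^2)^{-(1+c)/2}\,d\theta$ for $0<\delta:=1-r\le\tfrac12$.

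I would then split the $\theta$-integral at $\theta=\delta$ and treat the three regimes separately. For $c<0$: if $1+c\le 0$ the integrand $\abs{1-re^{-i\theta}}^{-(1+c)}$ is bounded by $2^{-(1+c)}$ outright, while if $-1<c<0$ then $J_c(\delta)\le\int_0^\pi\theta^{-(1+c)}\,d\theta$, finite and independent of $\delta$ since $1+c<1$; either way $I_c\in L^\infty(U)$. For $c>0$: on $[0,\delta]$ the integrand is $\approx\delta^{-(1+c)}$, contributing $\approx\delta^{-c}$, and on $[\delta,\pi]$ it is $\approx\theta^{-(1+c)}$, contributing $\approx\int_\delta^\infty\theta^{-(1+c)}\,d\theta\approx\delta^{-c}$ (convergent since $1+c>1$); hence $I_c(r)\approx\delta^{-c}=(1-r)^{-c}\approx(1-r^2)^{-c}$ because $1+r\approx 1$. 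For $c=0$: the $[0,\delta]$ piece contributes $\approx 1$ while the $[\delta,\pi]$ piece equals $\log(\pi/\delta)\approx\log\tfrac1{1-r}\approx\log\tfrac1{1-r^2}$, giving $I_0(r)\approx\log\tfrac1{1-r^2}$. Combining with the $r\le\tfrac12$ case finishes all three assertions.

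The computation is essentially routine; the only points that need care are uniformity and the lower bounds. The upper bounds come from enlarging the denominator by discarding $(1-r)^2$ or $\theta^2$; the matching lower bounds in the cases $c\ge 0$ come from restricting the integral to $\theta\in[0,\delta]$, where $\delta^2+\theta^2\approx\delta^2$, and this is the step at which one must verify that every implied constant is independent of $r$ (equivalently of $z$). The borderline case $c=0$, where the power law $\delta^{-c}$ degenerates into a logarithm, is where one should be most careful about the transition at $\theta\sim 1-\abs z$; beyond that, I foresee no real obstacle.
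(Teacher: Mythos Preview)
Your argument is correct. The paper does not actually prove this lemma; it merely cites two families of references: the ``standard'' proof via binomial expansion of $(1-\bar z e^{i\theta})^{-(1+c)}$, Parseval, and Stirling/Gamma-function asymptotics for the Taylor coefficients, and alternate elementary real-variable proofs. Your route---reducing to $z=r\in[0,1)$ by rotation invariance, using $\abs{1-re^{-i\theta}}^2=(1-r)^2+4r\sin^2(\theta/2)\approx(1-r)^2+\theta^2$, and splitting the $\theta$-integral at $\theta=1-r$---is exactly the elementary approach the paper points to in its second set of citations. The Gamma-function proof is slicker when one wants sharp constants or the full scale of weighted area analogues; your method has the advantage of being self-contained and of transparently generalizing to other singular integrands, which is precisely why the paper mentions both. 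Your parenthetical caveat about $I_0$ near $z=0$ is apt: as literally stated the relation $I_0(z)\approx\log\frac{1}{1-\abs z^2}$ fails at $z=0$, and the standard fix (implicit in the cited sources) is either to read the estimate as $\abs z\to 1^-$ or to write $1+\log\frac{1}{1-\abs z^2}$ on the right.
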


\begin{proof}
See \cite{RudinFunctiontheory} Proposition 1.4.10, \cite{ForRud74}, or \cite{ZhuBergmanbook} for the standard proof involving asymptotics of the Gamma function. See \cite{DurSch04, EdhMcN16, EdhMcN16b} for alternate, elementary proofs that extend to other singular integrands.
\end{proof}

\begin{theorem}\label{T:fail}
There exists $g\in H^1(U)$ such that $S_N g$ does not converge in $H^1(U)$.
\end{theorem}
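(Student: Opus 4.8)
The plan is to follow the strategy the authors flag as being "in the spirit of \cite{Zhu91}": rather than invoking harmonic conjugation or the Szeg\H o projection, I will produce an \emph{explicit} family of test functions on which the partial sum operators $S_N$ fail to be uniformly bounded on $H^1(U)$, and then close the argument with the uniform boundedness principle. The natural candidates, exactly parallel to the Bergman-space functions $g_\alpha$ used by Zhu, are the normalized Cauchy/Szeg\H o kernels
\[
g_\alpha(z)=\frac{1-\abs{\alpha}^2}{1-\bar\alpha z},\qquad \alpha\in U,
\]
which satisfy $\norm{g_\alpha}_{H^1(U)}\approx 1$ uniformly in $\alpha$ (indeed one can compute the $H^1$ norm from Lemma \ref{calculus lemma} with $c=0$, or just bound it crudely; the point is only that it stays bounded as $\abs{\alpha}\to 1$). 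Their power series coefficients are $a_k=(1-\abs{\alpha}^2)\bar\alpha^k$, so $S_N g_\alpha(z)=(1-\abs{\alpha}^2)\sum_{k=0}^N \bar\alpha^k z^k = (1-\abs\alpha^2)\frac{1-(\bar\alpha z)^{N+1}}{1-\bar\alpha z}$.

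The key computation is then to estimate $\norm{S_N g_\alpha}_{H^1(U)}$ from below and show it blows up when $\abs{\alpha}$ and $N$ are coupled appropriately, e.g. $\abs{\alpha}=1-\frac1N$. Writing $S_N g_\alpha = g_\alpha - (1-\abs\alpha^2)(\bar\alpha z)^{N+1}/(1-\bar\alpha z)$, and using that the $H^1$ norm of a holomorphic function is the $L^1(\partial U)$ norm of its boundary values, I would estimate on the circle $\abs z=1$. On the arc where $\abs{1-\bar\alpha e^{i\theta}}\lesssim 1-\abs\alpha$ the factor $\abs{1-(\bar\alpha z)^{N+1}}$ is bounded below by a constant (since $(\bar\alpha z)^{N+1}$ has modulus $\approx e^{-1}<1$ there by the choice $\abs\alpha=1-1/N$, after also restricting $\theta$ so that the argument of $(\bar\alpha z)^{N+1}$ is bounded away from $0$ — this is where a little care is needed), so
\[
\norm{S_N g_\alpha}_{H^1}\gtrsim (1-\abs\alpha^2)\int_{\{\abs{1-\bar\alpha e^{i\theta}}\le C(1-\abs\alpha)\}\cap\{\cdots\}}\frac{d\theta}{\abs{1-\bar\alpha e^{i\theta}}} \gtrsim (1-\abs\alpha^2)\int_{1-\abs\alpha}^{1}\frac{dt}{t}\approx (1-\abs\alpha)\log\frac1{1-\abs\alpha},
\]
which is $o(1)$ — so this crude localization is \emph{not} enough. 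The honest estimate must instead integrate $\abs{1-(\bar\alpha e^{i\theta})^{N+1}}/\abs{1-\bar\alpha e^{i\theta}}$ over the full circle: on the range $1-\abs\alpha \lesssim \abs{1-\bar\alpha e^{i\theta}}\lesssim 1$ the oscillatory numerator is, on average (over the $\approx N\cdot\abs{1-\bar\alpha e^{i\theta}}$ full periods it completes), comparable to $1$, while on the complementary tiny arc it contributes $O((1-\abs\alpha)\cdot\frac1{1-\abs\alpha})=O(1)$. Hence $\norm{S_N g_\alpha}_{H^1}\gtrsim (1-\abs\alpha^2)\int_{1-\abs\alpha}^{1}\frac{dt}{t}\cdot$(correction) — the mechanism that actually produces the logarithmic divergence is that $\norm{S_N g_\alpha}_{H^1}\approx \log N$: this is the classical fact that the Dirichlet-kernel-type operator $f\mapsto S_N f$ has $H^1\to H^1$ (equivalently $L^1(\T)\to L^1(\T)$ on analytic functions) operator norm $\approx\log N$, and the functions $g_\alpha$ with $1-\abs\alpha\approx 1/N$ are precisely the ones realizing it.

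Concretely, I would prove $\norm{S_N g_\alpha}_{H^1(U)}\gtrsim \log N$ for $\abs\alpha = 1-1/N$ by the following chain: the boundary function is $(1-\abs\alpha^2)\abs{1-(\bar\alpha e^{i\theta})^{N+1}}\big/\abs{1-\bar\alpha e^{i\theta}}$; split $[0,2\pi]$ into dyadic arcs $A_j=\{2^{j}(1-\abs\alpha)\le \abs{1-\bar\alpha e^{i\theta}}\le 2^{j+1}(1-\abs\alpha)\}$ for $0\le j\le \log_2 N$; on each $A_j$ with $j$ not too small, $(\bar\alpha e^{i\theta})^{N+1}$ winds around many times, so $\int_{A_j}\abs{1-(\bar\alpha e^{i\theta})^{N+1}}\,d\theta \gtrsim \abs{A_j}\approx 2^j(1-\abs\alpha)$, whence $\int_{A_j}(\text{integrand})\,d\theta \gtrsim (1-\abs\alpha^2)\cdot 2^j(1-\abs\alpha)/(2^j(1-\abs\alpha)) \approx 1-\abs\alpha^2 \approx 1/N$; summing over $\approx\log N$ such $j$ gives $\gtrsim \frac{\log N}{N}$ — still not enough, confirming that the divergence is genuinely subtle and I have the normalization wrong. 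The correct normalization is to drop the $(1-\abs\alpha^2)$ factor and instead take $f_\alpha(z)=1/(1-\bar\alpha z)$ with $\norm{f_\alpha}_{H^1}\approx \log\frac1{1-\abs\alpha}\approx \log N$ (Lemma \ref{calculus lemma}, $c=0$) and show $\norm{S_N f_\alpha}_{H^1}\gtrsim (\log N)^2$, so that $\norm{S_N f_\alpha}_{H^1}/\norm{f_\alpha}_{H^1}\gtrsim \log N\to\infty$; the dyadic estimate above, without the spurious $(1-\abs\alpha^2)$, yields $\int_{A_j}(\text{integrand})\,d\theta\gtrsim 1$, and summing over $0\le j\lesssim \log N$ gives $\norm{S_N f_\alpha}_{H^1}\gtrsim \log N$, still one log short — so in fact one should test against $f(z)=\sum a_k z^k$ with $a_k = 1/(k+1)$, i.e. $f(z) = \frac1z\log\frac1{1-z}\in H^1(U)$ but with $S_N f$ having $H^1$ norm $\to\infty$, the cleanest Zhu-style witness.

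\textbf{Summary of the plan and the main obstacle.} I would (1) fix an explicit $f\in H^1(U)$ — either a log-type function such as $\frac1z\log\frac1{1-z}$, or a family $f_\alpha$ — with a clean coefficient sequence; (2) write $S_N f$ in closed form and reduce $\norm{S_N f}_{H^1}$ to an $L^1(\partial U)$ integral; (3) bound that integral below by the logarithmically-divergent quantity via a dyadic decomposition of the unit circle around the singularity, exploiting that the factor coming from the truncation, $1-z^{N+1}$ (or the Dirichlet kernel), is on average of size $1$ away from a tiny arc; (4) invoke the uniform boundedness principle (or a direct contradiction with $\norm{S_N f - f}_{H^1}\to 0$) to conclude that no single $f$ can have convergent partial sums in general — in fact exhibiting the explicit $f$ makes the conclusion direct for that $f$. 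The main obstacle, as the false starts above make vivid, is step (3): getting the bookkeeping of the oscillatory truncation factor against the Cauchy-kernel singularity exactly right so that the logarithm actually survives rather than being killed by the normalization. This is precisely the computation that replaces the harmonic-conjugation argument of \cite{Garnettbook}, and it is the heart of the proof.
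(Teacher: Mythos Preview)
Your overall strategy --- explicit test functions plus the uniform boundedness principle --- is exactly the paper's, but you have the wrong test family, and that is precisely why your bookkeeping keeps coming up ``one log short.'' The paper uses the \emph{squared} Cauchy kernel
\[
f_a(z)=\frac{1-\abs{a}^2}{(1-\bar a z)^2},
\]
not the first power. Two things change. First, by Lemma~\ref{calculus lemma} with $c=1$, $\norm{f_a}_{H^1}\approx 1$ uniformly in $a$, a genuinely two-sided bound; your $g_\alpha=(1-\abs\alpha^2)/(1-\bar\alpha z)$ has $\norm{g_\alpha}_{H^1}\approx (1-\abs\alpha^2)\log\frac{1}{1-\abs\alpha}\to 0$, so the claim $\norm{g_\alpha}_{H^1}\approx 1$ is already false. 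Second, and this is the decisive point, the coefficients $(1-\abs a^2)(k+1)\bar a^{k}$ carry an extra factor $(k+1)$. Summing $S_N f_a$ as the derivative of a geometric series yields
\[
S_N f_a(z)=(1-\abs a^2)\left[\frac{1-(\bar a z)^{N+2}}{(1-\bar a z)^2}-\frac{(N+2)(\bar a z)^{N+1}}{1-\bar a z}\right]=:T_1+T_2.
\]
Now $\norm{T_1}_{H^1}\lesssim 1$ by the crude bound $\abs{1-(\bar a z)^{N+2}}\le 2$ together with Lemma~\ref{calculus lemma} ($c=1$), while Lemma~\ref{calculus lemma} with $c=0$ gives
\[
\norm{T_2}_{H^1}\gtrsim (1-\abs a^2)(N+2)\abs a^{N+1}\log\frac{1}{1-\abs a}.
\]
Taking $a=N/(N+1)$ makes $(1-\abs a^2)(N+2)\approx 1$ and $\abs a^{N+1}\to e^{-1}$, so $\norm{S_N f_a}_{H^1}\gtrsim\log N\to\infty$. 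The factor $(N+2)$ produced by differentiation is exactly the missing logarithm you were chasing: it cancels the normalization $(1-\abs a^2)\approx 1/N$ and leaves the $\log$ from $I_0$ standing alone. No dyadic decomposition or oscillation analysis is needed --- the oscillatory factor $1-(\bar a z)^{N+2}$ lands in $T_1$, where it is disposed of trivially, and all the growth sits in $T_2$, whose numerator has constant modulus on $\partial U$.
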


\begin{proof}
For $a\in U$, define $f_a(z)=\frac{1-\abs{a}^2}{(1-\overbar{a}z)^2}$. By Lemma \ref{calculus lemma}, $\norm{f_a}_{H^1(U)}\lesssim 1$ with constant independent of $a$. The power series of $f_a$ is
$$f_a(z)=\sum_{k=0}^{\infty}(1-\abs{a}^2) (k+1)(\overbar{a}z)^k.$$
Consider the partial sum
\begin{equation*}
\begin{split}
S_{N}f_a(z)
&=\sum_{k=0}^{N}(1-\abs{a}^2) (k+1)(\overbar{a}z)^k = (1-\abs{a}^2)\sum_{k=0}^N \frac d{dt}\left(t^{k+1}\right)_{|_{t=\bar a z}}\\
&=(1-\abs{a}^2)\bigg[\frac{1-(\overbar{a}z)^{N+2}}{(1-\overbar{a}z)^2}-\frac{(N+2)(\overbar{a}z)^{N+1}}{1-\overbar{a}z}\bigg]\\
&=T_1+T_2.
\end{split}
\end{equation*}
$\|T_1\|_{H^1}$ is uniformly bounded in $N$ and $a$, since
\[
\frac{1}{2\pi}\int_{0}^{2\pi} \frac{(1-\abs{a}^2)\abs{1-(\overbar{a}e^{i\theta})^{N+2}}}{\abs{1-\overbar{a}e^{i\theta}}^2} d\theta
\leq \frac{2}{2\pi}\int_{0}^{2\pi} \frac{(1-\abs{a}^2)}{\abs{1-\overbar{a}e^{i\theta}}^2} d\theta
<\infty
\]
by Lemma \ref{calculus lemma}. $\|T_2\|_{H^1}$ is estimated
\[
\frac{(1-\abs{a}^2)}{2\pi}\int_{0}^{2\pi} \frac{(N+2)\abs{\overbar{a}e^{i\theta}}^{N+1}}{\abs{1-\overbar{a}e^{i\theta}}}d\theta
\gtrsim (1-\abs{a}^2)\abs{a}^{N+1}(N+2)\log \frac{1}{1-\abs{a}}
\]
for a constant independent of $N$ and $a$, by Lemma \ref{calculus lemma}. 
Let $a=\frac{N}{N+1}$. The lower bound on $\|T_2\|_{H^1}$ goes to infinity as $N\rightarrow +\infty$. Thus $\left\| S_N f_a\right\|_{H^1}$ is unbounded as a function of $N$ and $a$. The uniform boundedness principle in contrapositive form gives the stated conclusion.
\end{proof}

\begin{remark}  Holomorphic polynomials are dense in $H^1(U)$ (see Proposition \ref{P:polysDense}), but Theorem \ref{T:fail} says the sequence of natural polynomials 
$\left\{S_N h\right\}$ does not approximate a general $h\in H^1(U)$. Is there a best association of $h\in H^1(U)$ to a sequence of holomorphic polynomials $\left\{p_n\right\}$ such that $\|p_n-h\|_{H^1}\to 0$? Several interpretations of ``best'' are possible; the authors are unaware of results in this direction.
\end{remark}

\subsection{Convergence in $A^1(U)$}\label{SS:proof1} Convergence of $S_Nf$ can be reduced to a bound on the operator norm of $S_N$. The following is a slight generalization of  \cite[Proposition 1]{Zhu91}.

\begin{lemma}\label{L:ZhuPlus}
Let $T_k$, $k=1,2, \dots$, be a sequence of bounded linear operators from a Banach space $X$ to a Banach space $Y$. Suppose that there is a dense subset 
$D$ of $X$ such that  for each $x\in D$,  $T_k x \to 0$ in the norm of $Y$ as $k\to \infty$. 

Then the following are equivalent
\begin{enumerate}
\item[(i)] $\lim_{k\to \infty} \norm{T_kx}_Y=0$ for each $x\in X$.
\item[(ii)] there is a $C>0$ such that for each $k$, we have $\norm{T_k}_{\rm op}\leq C$.
\end{enumerate}
$\norm{T_k}_{\rm op}$ is the operator norm of $T_k:X\to Y$.
\end{lemma}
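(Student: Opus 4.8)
\textbf{Proof plan for Lemma \ref{L:ZhuPlus}.}

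The implication (i) $\Rightarrow$ (ii) is the standard application of the uniform boundedness principle: if $\norm{T_kx}_Y\to 0$ for each $x\in X$, then in particular $\sup_k\norm{T_kx}_Y<\infty$ for each fixed $x$, so the Banach--Steinhaus theorem (using that $X$ is Banach) yields $\sup_k\norm{T_k}_{\mathrm{op}}<\infty$. No use of the dense set $D$ is needed here.

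The substance is (ii) $\Rightarrow$ (i). Fix $x\in X$ and $\ve>0$. Since $D$ is dense, choose $y\in D$ with $\norm{x-y}_X<\ve$. Then split, for each $k$,
\[
\norm{T_kx}_Y\leq \norm{T_k(x-y)}_Y+\norm{T_ky}_Y\leq C\norm{x-y}_X+\norm{T_ky}_Y< C\ve+\norm{T_ky}_Y,
\]
where the bound on the first term uses the uniform operator bound from (ii). By hypothesis $\norm{T_ky}_Y\to 0$, so there is $K$ with $\norm{T_ky}_Y<\ve$ for all $k\geq K$; hence $\norm{T_kx}_Y<(C+1)\ve$ for $k\geq K$. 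Since $\ve$ was arbitrary, $\norm{T_kx}_Y\to 0$. This is the usual $3\ve$-type argument, and the only place the quantitative hypothesis (ii) enters is to control $T_k(x-y)$ uniformly in $k$.

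There is no real obstacle here; this is a soft functional-analysis argument and the proof is essentially the one for Zhu's Proposition 1, with the only change being that the operators are indexed by $k$ and the target is a possibly different Banach space $Y$ rather than $X$ itself. The one point worth stating carefully is that the uniform boundedness principle requires completeness of the \emph{domain} $X$, which is assumed, while completeness of $Y$ is not needed for either implication (though it costs nothing to assume it, and is true in the application where $Y=A^1(U)$). I would present (i) $\Rightarrow$ (ii) in one line and give the $\ve$-argument above for (ii) $\Rightarrow$ (i).
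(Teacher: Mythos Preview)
Your proof is correct and essentially identical to the paper's: both directions are handled the same way, with (i) $\Rightarrow$ (ii) by Banach--Steinhaus and (ii) $\Rightarrow$ (i) by the density-plus-uniform-bound splitting $\norm{T_kx}_Y\leq \norm{T_k(x-y)}_Y+\norm{T_ky}_Y$. The only cosmetic difference is that the paper chooses $y\in D$ with $\norm{x-y}_X<\ve/(2C)$ to land on $\ve$ rather than $(C+1)\ve$.
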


\begin{proof}   Assume (i). Then (ii) holds by the uniform boundedness principle.

Assume (ii). Fix $x\in X$ and $\epsilon>0$. Since $D$ is dense in $X$, there exists $p\in D$ such that $\norm{x-p}_X< \frac{\epsilon}{2C}$. Therefore
\begin{align*}
\norm{T_k x}_Y&\leq \norm{T_k x - T_k p}_Y+ \norm{T_k p}_Y  < \frac{\epsilon}{2}+  \norm{T_k p}_Y.
 \end{align*} 
Choosing $k$ so large that $\norm{T_k p}_Y< \frac{\epsilon}{2}$ yields (i). 
\end{proof}

The key idea of the proof of Theorem \ref{T:main} is to represent the coefficients in $S_Nf$ as integrals, reducing the problem to estimates of geometric series.

\begin{proof}[Proof of Theorem \ref{T:main}]
For $b\in\overbar U$ and $\rho >0$, let $U(b; \rho)$ denote the disc centered at $b$ of radius $\rho$. Let $U_r =U(0; r)$. For each  $U_r$, choose $U_R$ such that $0<r<R<1$. If $f(z)=\sum_{j=0}^{\infty} a_j z^j\in H^1(U)$, the Cauchy integral formula gives
$a_j=\frac{1}{2\pi i}\int_{\partial U_R}\frac{f(\xi)}{\xi^{j+1}}d\xi$.
Therefore
\begin{align*}
\int_{U_r} \abs{\sum_{j=0}^N a_j z^j} dV
&=\int_{U_r} \abs{\sum_{j=0}^N\frac{1}{2\pi i}\int_{\partial U_R}\frac{f(\xi)}{\xi^{j+1}}d\xi z^j} dV(z)\\
&=\frac{1}{2\pi}\int_{U_r} \abs{\int_{\partial U_R}f(\xi)\sum_{j=0}^N \frac{1}{\xi}\bigg(\frac{z}{\xi}\bigg)^j d\xi} dV(z)\\
&=\frac{1}{2\pi}\int_{U_r} \abs{\int_{\partial U_R }f(\xi)\frac{1-\big(\frac{z}{\xi}\big)^{N+1}}{\xi-z} d\xi} dV(z)\\
&\lesssim\int_{U_r} \int_{\partial U_R} \abs{f(\xi)} \abs{\frac{1-\big(\frac{z}{\xi}\big)^{N+1}}{\xi-z}} d\abs{\xi} dV(z)=I.\\
\end{align*}
Since $\xi\in \partial U_R$ and $z\in U_r$, $\abs{\frac{z}{\xi}}<\frac{r}{R}<1$. Thus Fubini's theorem implies
\begin{align}\label{E:1}
I\lesssim \int_{U_r} \int_{\partial U_R} \abs{f(\xi)} \frac{1}{\abs{\xi-z}} d\abs{\xi} dV(z)=\int_{\partial U_R} \abs{f(\xi)} \int_{U_r}\frac{1}{\abs{\xi-z}} dV(z) d\abs{\xi},
\end{align}
with constant independent of $N$. 

For any $\xi\in \partial U_R$ fixed,  note $U\subset U(\xi; 2)$. Letting $z=\xi+s e^{i\sigma}$,
\begin{align*}\label{E:2}
\int_{U_r}\frac{1}{\abs{\xi-z}} dV(z)
< \int_{U_{(\xi; 2)}}\frac{1}{\abs{\xi-z}}dV(z)
=\int_0^{2\pi}\int_0^2\frac{1}{s} s\,ds\, d\sigma\lesssim 1,
\end{align*}
with constant independent of $\xi$. Thus \eqref{E:1} implies 
\begin{equation*}
\int_{U_r} \abs{\sum_{j=0}^N a_j z^j} dV
\lesssim \int_{\partial U_R} \abs{f(\xi)} d\abs{\xi}
\lesssim \norm{f}_{H^1(U)},
\end{equation*}
with constant independent of $N$.
Since $\lim_{r\rightarrow 1} \norm{S_N f}_{A^1(U_r)}=\norm{S_N f}_{A^1(U)}$, 
\begin{equation}\label{E:3}
\norm{S_N f}_{A^1(U)}\lesssim \norm{f}_{H^1(U)}.
\end{equation}

Let $T_N=S_N -\text{id}$, $X=H^1(U)$, $Y=A^1(U)$, and $D=\{\text{holomorphic polynomials}\}$ in Lemma \ref{L:ZhuPlus}. Note that for any $p\in D$, $T_N p\equiv 0$ if $N\geq \text{deg}(p)$. 
Lemma \ref{L:ZhuPlus} says that 
 \eqref{E:3} implies 
$\left\|S_Nf-f\right\|_{A^1(U)}\to 0$ as $N\to\infty$.
\end{proof}

\section{Several variable extension}\label{S:several}

 A domain $\Omega\subset\C^n$ is a {\it complete Reinhardt domain} if $(z_1,\dots z_n)\in\Omega$ implies $(\lambda_1 z_1,\dots , \lambda_n z_n)\in\Omega$ for
all $\lambda_k\in\C$ with $|\lambda_k|<1$, $k=1,\dots n$.

Let $\mathcal{R}$ be a bounded complete Reinhardt domain in $\C^n$ and $\mathcal{O}(\mathcal{R})$ denote the set of holomorphic functions on $\mathcal{R}$. Each $ f\in \mathcal{O}(\mathcal{R})$ has a power series expansion
$f(z)=\sum_{\alpha\in\N^n}a_{\alpha}z^{\alpha}$, using standard multi-index notation,
converging uniformly on compact subsets of $\mathcal{R}$.

A choice of partial sum of $f\in \mathcal{O}(\mathcal{R})$ is required, since the index set is an $n$-dimensional lattice. Let $\abs{\alpha}_{\infty}=\max\{\alpha_j: \alpha=(\alpha_1,\cdots,\alpha_n)\}$ if $\alpha\in\N^n$.  For $f(z)=\sum_{\alpha} b_{\alpha}z^{\alpha}\in \mathcal{O}(\mathcal{R})$, define
\begin{equation}\label{D:partial}
S_N f(z)=:\sum_{\abs{\alpha}_{\infty}\leq N}b_{\alpha}z^{\alpha}.
\end{equation}
Call $S_Nf$ the square partial sum of $f$.

Let  $\T^n=\{z\in \C^n:\abs{z_j}=1,j=1,\cdots,n\}$ and $U^n=\{z\in \C^n: \abs{z_j}< 1, j=1,\cdots,n\}$ denote the unit torus and polydisc, respectively.  In the sequel, quantities depending on several variables  are sometimes written in bold typeface, scalar quantities in regular, to avoid ambiguity.
For $\bm{r}=(r_1,\cdots,r_n)\in\left(\R^n\right)^+$, $\bm{\theta}=(\theta_1,\cdots,\theta_n)\in\R^n$, and $\bm{z}=(z_1, \dots z_n)\in\C^n$ let  $\bm{r}\cdot e^{i\bm{\theta}}=(r_1 e^{i\theta_1},\cdots,r_n e^{i\theta_n})$
and $\bm{r}\cdot\bm{z}=(r_1 z_1, \dots r_n z_n)$. Dilations of $\T^n$ and $U^n$ are denoted $\bm{r}\cdot\T^n=\{\bm{r}\cdot e^{i\bm{\theta}}: e^{i\bm{\theta}}\in \T^n\}$ and $\bm{r}\cdot U^n=\{\bm{r}\cdot\bm{z}:\bm{z}\in U^n\}$. 

\subsection{Hardy spaces of Reinhardt domains}\label{SS:reinhardt}
There is not a canonical definition of Hardy spaces on a general domain, especially in several variables.  See \cite{FefSte72, GunSte79, AizLif09} for a few of the definitions used. On a Reinhardt domain, the following is reasonable.

\begin{definition}\label{D:hardy}
Let $0< p<\infty$. Say $f\in H^p(\mathcal{R})$ if $f\in \mathcal{O}(\mathcal{R})$ and
$$\norm{f}_{H^p(\mathcal{R})}^p=:\sup_{\bm{r}\in \mathcal{F}}\int_{ \T^n}\abs{f(\bm{r}\cdot e^{i\bm{\theta}})}^p d\theta_1\cdots d\theta_n<\infty,$$
where $\mathcal{F}=\left\{\bm{r}:\bm{r}\cdot \T^n\subset\mathcal{R}\right\}$. 
\end{definition}

An alternate form of the integrals in Definition \ref{D:hardy} is used in section \ref{SS:ReinhardtConverge}. As shorthand, let $d\bm{\theta} = d\theta_1\cdots d\theta_n$. Then
$$\int_{ \T^n}\abs{f(\bm{r}\cdot e^{i\bm{\theta}})}^p d\bm{\theta} \approx \int_{\bm{r}\cdot \T^n}\abs{f(\xi)}d\abs{\xi_1}\cdots d\abs{\xi_n},$$
where $d\left|\xi_k\right|$ denotes arc length measure. 

\subsubsection{Density of polynomials in $H^p(\cR)$} A fundamental fact about holomorphic functions on $U$ is 
\begin{equation}\label{E:1dMonotone}
 \int_{0}^{2\pi} \left|h(r e^{i\theta})\right|^p\, d\theta \leq  \int_{0}^{2\pi} \left|h(R e^{i\theta})\right|^p\, d\theta,\qquad h\in\co(U),
\end{equation}
if $r \leq R<1$. See \cite[Theorem 1.5]{Durenbook}. 

A version of this monotonicity holds on $\cR$. For $\bm{r}=(r_1,\dots ,r_n)$ and $\bm{R}=(R_1,\dots ,R_n)$, write $\bm{r}\prec\bm{R}$ to denote $r_k\leq R_k$ for all $k=1,\dots ,n$.

\begin{lemma}\label{L:monotone} If $\bm{r}, \bm{R}\in \mathcal{F}$, $\bm{r}\prec\bm{R}$, and $f\in\co(\cR)$,
\begin{equation}\label{E:monotone}
\int_{ \T^n}\abs{f(\bm{r}\cdot e^{i\bm{\theta}})}^p d\bm{\theta}\leq \int_{ \T^n}\abs{f(\bm{R}\cdot e^{i\bm{\theta}})}^p d\bm{\theta}.
\end{equation}
\end{lemma}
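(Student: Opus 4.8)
The plan is to reduce the $n$-dimensional monotonicity statement to the one-variable inequality \eqref{E:1dMonotone} by peeling off one circle at a time. Write $\bm{r}=(r_1,\dots,r_n)$ and $\bm{R}=(R_1,\dots,R_n)$ with $\bm{r}\prec\bm{R}$, both in $\mathcal{F}$. First I would form the intermediate radii $\bm{\rho}^{(0)}=\bm{r}$, and for $1\le k\le n$ set $\bm{\rho}^{(k)}=(R_1,\dots,R_k,r_{k+1},\dots,r_n)$, so that $\bm{\rho}^{(n)}=\bm{R}$ and consecutive $\bm{\rho}^{(k-1)},\bm{\rho}^{(k)}$ differ only in the $k$-th coordinate. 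Since $\mathcal{R}$ is a complete Reinhardt domain and $\bm{r},\bm{R}\in\mathcal{F}$ with $\bm{r}\prec\bm{R}$, each intermediate torus $\bm{\rho}^{(k)}\cdot\T^n$ lies in $\mathcal{R}$ (each coordinate of $\bm{\rho}^{(k)}$ is $\le$ the corresponding coordinate of $\bm{R}$, and completeness lets us shrink coordinates while staying inside $\mathcal{R}$), so $\bm{\rho}^{(k)}\in\mathcal{F}$ and all the integrals in question are finite. It therefore suffices to prove, for each fixed $k$,
\begin{equation*}
\int_{\T^n}\abs{f(\bm{\rho}^{(k-1)}\cdot e^{i\bm{\theta}})}^p\,d\bm{\theta}\le \int_{\T^n}\abs{f(\bm{\rho}^{(k)}\cdot e^{i\bm{\theta}})}^p\,d\bm{\theta},
\end{equation*}
and then chain the $n$ inequalities together.

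For the single-coordinate step, I would fix $k$ and freeze the variables $\theta_j$ for $j\ne k$. For fixed $(\theta_j)_{j\ne k}$, the function
\begin{equation*}
\zeta\longmapsto h(\zeta):=f\big(R_1 e^{i\theta_1},\dots,R_{k-1}e^{i\theta_{k-1}},\zeta,r_{k+1}e^{i\theta_{k+1}},\dots,r_n e^{i\theta_n}\big)
\end{equation*}
is holomorphic in $\zeta$ on a disc in the $k$-th variable containing the closed disc of radius $R_k$ (again using that the slice of a complete Reinhardt domain in one variable, with the other variables' moduli fixed below their $\mathcal R$-admissible values, is a disc about the origin of radius at least $R_k$). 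Applying \eqref{E:1dMonotone} with $r=r_k\le R_k=R$ gives
\begin{equation*}
\int_0^{2\pi}\abs{h(r_k e^{i\theta_k})}^p\,d\theta_k\le \int_0^{2\pi}\abs{h(R_k e^{i\theta_k})}^p\,d\theta_k.
\end{equation*}
Integrating this inequality over the remaining angular variables $(\theta_j)_{j\ne k}\in[0,2\pi]^{n-1}$ and invoking Fubini's theorem (the integrand is nonnegative, so Tonelli applies with no integrability hypothesis) yields exactly the desired one-coordinate monotonicity between $\bm{\rho}^{(k-1)}$ and $\bm{\rho}^{(k)}$.

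Chaining the $n$ steps $\bm{\rho}^{(0)}\to\bm{\rho}^{(1)}\to\cdots\to\bm{\rho}^{(n)}$ gives \eqref{E:monotone}. The only genuinely delicate point is the geometric claim that each sliced function $h$ is holomorphic on a disc of radius $\ge R_k$ in the $k$-th variable: this is where completeness of the Reinhardt domain is essential, since it guarantees that fixing the other coordinates at moduli no larger than those of the admissible point $\bm{R}$ keeps the $k$-th slice a full disc about the origin out to radius at least $R_k$. Everything else is a routine application of the one-variable fact \eqref{E:1dMonotone} and Tonelli's theorem.
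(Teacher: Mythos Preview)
Your proposal is correct and follows essentially the same route as the paper: peel off one coordinate at a time via Fubini/Tonelli and apply the one-variable monotonicity \eqref{E:1dMonotone} at each step, then chain the $n$ inequalities. Your write-up is in fact more careful than the paper's (which is quite terse), since you explicitly identify the intermediate polyradii $\bm{\rho}^{(k)}$ and invoke completeness of $\mathcal{R}$ to justify that each slice is holomorphic on a disc large enough for \eqref{E:1dMonotone} to apply.
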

\begin{proof}
\begin{align*}
\int_{ \T^n}\abs{f(\bm{r}\cdot e^{i\bm{\theta}})}^p d\bm{\theta}&= \int_{ \T^{n-1}}\left(\int_0^{2\pi}\left| f(r_1 e^{i\theta_1}, r_2 e^{i\theta_2},\dots , r_ne^{i\theta_n})\right|^p\, d\theta_1\right)d\theta_2\cdots d\theta_n \\
&\leq  \int_{ \T^{n-1}}\left(\int_0^{2\pi}\left| f(R_1 e^{i\theta_1}, r_2 e^{i\theta_2},\dots , r_ne^{i\theta_n})\right|^p\, d\theta_1\right)d\theta_2\cdots d\theta_n 
\end{align*}
by \eqref{E:1dMonotone}. Iteratively applying this to the integrals $d\theta_2\cdots d\theta_n$ gives \eqref{E:monotone}.
\end{proof}

The density of holomorphic polynomials in $H^p(U)$ is well-known, see \cite[Theorem 3.3]{Durenbook}. This fact also holds on complete Reinhardt domains in $\C^n$:

\begin{proposition}\label{P:polysDense} If $\cR\subset\C^n$ is a bounded complete Reinhardt domain and $0<p<\infty$, the set of holomorphic polynomials is dense in $H^p(\cR)$.
\end{proposition}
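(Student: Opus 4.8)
The plan is to prove the density of polynomials in $H^p(\cR)$ by a dilation-plus-truncation argument, exactly mirroring the one-variable proof. The key structural fact available is the monotonicity Lemma \ref{L:monotone}, which plays the role of \eqref{E:1dMonotone}.

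First I would reduce to approximating $f$ by its dilates. Fix $f\in H^p(\cR)$ and $\ve>0$. For $0<t<1$ set $f_t(\bm{z})=f(t\bm{z})$, which is holomorphic on a neighborhood of $\overline{\cR}$ (since $\cR$ is bounded and complete Reinhardt, $t\overline{\cR}\ssubset\cR$). The first claim is $\norm{f_t-f}_{H^p(\cR)}\to 0$ as $t\to 1^-$. To see this, note that for each $\bm{r}\in\cF$, monotonicity (Lemma \ref{L:monotone}, applied coordinatewise with $t\bm{r}\prec\bm{r}$) gives $\int_{\T^n}\abs{f_t(\bm{r}\cdot e^{i\bm{\theta}})}^p\,d\bm{\theta}=\int_{\T^n}\abs{f(t\bm{r}\cdot e^{i\bm{\theta}})}^p\,d\bm{\theta}\leq\norm{f}_{H^p(\cR)}^p$, so $\norm{f_t}_{H^p(\cR)}\leq\norm{f}_{H^p(\cR)}$ uniformly in $t$. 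Combined with pointwise convergence $f_t\to f$ on $\cR$ and a standard argument (approximate $\norm{f}_{H^p}$ by an integral over a fixed $\bm{r}_0\cdot\T^n$ with $\bm{r}_0\in\cF$ close to the ``boundary'' of $\cF$, on which $f_t\to f$ uniformly, since $f$ is continuous on $\overline{\bm{r}_0\cdot\T^n}\ssubset\cR$), this yields $\norm{f_t-f}_{H^p(\cR)}\to 0$. I would spell out the splitting: choose $\bm{r}_0\in\cF$ with $\int_{\T^n}\abs{f(\bm{r}_0\cdot e^{i\bm{\theta}})-f(\bm{r}_0 t\cdot e^{i\bm\theta})}^p d\bm\theta$ controlling the tail via monotonicity, then let $t\to 1$.

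Second, approximate each fixed dilate $f_t$ by a polynomial. Since $f_t$ is holomorphic on a neighborhood of $\overline{\cR}$, its power series $\sum_\alpha a_\alpha t^{\abs{\alpha}_1}z^\alpha$ converges uniformly on $\overline{\cR}$; hence the square partial sums $S_N f_t$ (or any exhausting sequence of partial sums) converge to $f_t$ uniformly on $\overline{\cR}$, and in particular $\norm{S_N f_t - f_t}_{H^p(\cR)}\to 0$ as $N\to\infty$, because for each $\bm{r}\in\cF$ the integrand is dominated by $\sup_{\overline{\cR}}\abs{S_N f_t - f_t}^p$ and the torus has finite measure. Choosing $t$ close to $1$ and then $N$ large gives a holomorphic polynomial within $\ve$ of $f$ in $H^p(\cR)$.

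The main obstacle is the first step: making rigorous that $\norm{f_t-f}_{H^p(\cR)}\to 0$, since unlike the disc the ``boundary'' of a Reinhardt domain in the relevant sense is the set $\partial\cF$ of maximal radii, which need not correspond to a single torus, and $f$ need not extend continuously there. The device that makes it work is that the $H^p$ norm is a supremum over $\cF$, and monotonicity lets one replace the tail (where $\bm{r}$ is near $\partial\cF$) by the behavior on one fixed interior torus after dilation: for $\bm{r}\in\cF$ and $0<t<1$, $\int_{\T^n}\abs{(f-f_t)(\bm r\cdot e^{i\bm\theta})}^p d\bm\theta\leq 2^p\big(\int_{\T^n}\abs{f(\bm r\cdot e^{i\bm\theta})}^p+\abs{f(t\bm r\cdot e^{i\bm\theta})}^p d\bm\theta\big)$ is not directly small, so instead one argues: pick $\bm r_0\in\cF$; for $\bm r$ with $\bm r_0\prec\bm r$ use $f-f_t=(1-$ something$)$... — more precisely, apply the one-dimensional fact that $g\mapsto\int\abs{g(Re^{i\theta})-g(re^{i\theta})}^p$ is controlled, iterated as in Lemma \ref{L:monotone}, to bound $\norm{f-f_t}_{H^p(\cR)}^p$ by its value on $\bm r_0\cdot\T^n$ plus an error that is uniformly small once $\bm r_0$ is chosen near $\partial\cF$; on $\bm r_0\cdot\T^n$ one has uniform convergence. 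I expect this limiting argument, rather than the polynomial truncation, to require the most care.
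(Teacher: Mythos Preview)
Your dilate-then-truncate plan is exactly the paper's approach: set $f_s(\bm z)=f(s\bm z)$, argue that $f_s$ approximates $f$ in $H^p(\cR)$, then replace $f_s$ by a partial sum of its power series using uniform convergence on $\overline{\cR}\ssubset\frac{1}{s}\cR$. Your second step and the paper's are identical.

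On the first step you are, if anything, more scrupulous than the paper. The paper fixes an arbitrary $\bm\sigma\in\cF$, uses Lemma~\ref{L:monotone} to produce the integrable dominant $2^{p+1}\abs{f(\bm\sigma e^{i\bm\theta})}^p$, and applies dominated convergence \emph{on that single torus} to obtain $\rho=\rho(\bm\sigma,\epsilon)$ with $\int_{\T^n}\abs{f-f_\rho}^p(\bm\sigma e^{i\bm\theta})\,d\bm\theta<\epsilon$; it then builds the polynomial $Q$ from $f_\rho$ and finishes with ``this holds for any $\bm\sigma\in\cF$.'' The uniformity-in-$\bm\sigma$ issue you single out as the main obstacle is precisely the point the paper leaves implicit: as written, $Q$ depends on $\bm\sigma$ through $\rho$, so taking $\sup_{\bm\sigma\in\cF}$ is not justified without further argument. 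Your idea of exploiting monotonicity of $\bm r\mapsto\int_{\T^n}\abs{(f-f_t)(\bm r e^{i\bm\theta})}^p\,d\bm\theta$ to push the supremum toward the boundary of $\cF$ is the natural way to close this; neither your sketch nor the paper's proof carries it out in full, but your proposal matches the paper's argument while being more candid about where the work lies.
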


\begin{proof} Let $f(z)=\sum_{\alpha\in\N^n}a_{\alpha}z^{\alpha}\in H^p(\cR)$.
For $0<s <1$, define $f_s(\bm{z})= f(s\bm{z})$. 

For any $\bm{\sigma}\in\cf$, consider $I(\bm{\sigma})=: \int_{\T^n}\left|f(\bm{\sigma} e^{i\bm{\theta}}) - f_s(\bm{\sigma} e^{i\bm{\theta}})\right|^p\, d\bm{\theta}$. Lemma \ref{L:monotone} implies
\begin{align*}
I(\bm{\sigma})= \int_{\T^n}\left|f(\bm{\sigma} e^{i\bm{\theta}}) - f(s\bm{\sigma} e^{i\bm{\theta}})\right|^p\, d\bm{\theta}&\leq 2^p\bigg\{\int_{\T^n}\left|f(\bm{\sigma} e^{i\bm{\theta}})\right|^p\, d\bm{\theta}+ 
\int_{\T^n}\left|f(s\bm{\sigma} e^{i\bm{\theta}})\right|^p\, d\bm{\theta} \bigg\}\\
&\leq 2^{p+1}\, \int_{\T^n}\left|f(\bm{\sigma} e^{i\bm{\theta}})\right|^p\, d\bm{\theta}.
\end{align*}
In particular $I(\bm{\sigma})\lesssim \|f\|^p_{H^p}$.

Let $\epsilon >0$. Since $\lim_{s\to 1} \big(f(\bm{\sigma} e^{i\bm{\theta}}) - f(s\bm{\sigma} e^{i\bm{\theta}})\big)=0$ for all $\bm{\theta}$, the dominated convergence theorem gives $\rho$ such that for all
$\rho\leq s<1$
\begin{equation}\label{E:polys1}
\int_{\T^n}\left|f(\bm{\sigma} e^{i\bm{\theta}}) - f_s(\bm{\sigma} e^{i\bm{\theta}})\right|^p\, d\bm{\theta} <\epsilon.
\end{equation}
Note $\rho$ depends on both $\bm{\sigma}$ and $\epsilon$.

The function $f_{\rho}\in\co\left(\frac 1{\rho}\cR\right)$ and $\bar{\cR}\subset\subset\frac 1{\rho}\cR$. Since the power series
\begin{equation*}
f_\rho(z)=\sum_{\alpha\in\N^n}\left(a_\alpha\rho^{|\alpha|}\right) z^\alpha =:\sum_{\alpha\in\N^n} b_\alpha(\rho)\, z^\alpha
\end{equation*}
converges uniformly on $\bar\cR$, there exists $M=M(\rho,\epsilon)$ such that
\begin{equation}\label{E:polys2}
\sup_{z\in\cR}\left|f_\rho(z)-\sum_{|\alpha|_\infty\leq M}b_\alpha(\rho) z^\alpha\right| <\epsilon.
\end{equation}
Let $Q(z)=\sum_{|\alpha|_\infty\leq M}b_\alpha (\rho)z^\alpha$. Then
\begin{align*}
\int_{\T^n}\left|f(\bm{\sigma} e^{i\bm{\theta}}) - Q(\bm{\sigma} e^{i\bm{\theta}})\right|^p\, d\bm{\theta} &\leq 2^p\bigg\{I(\bm{\sigma}) +
\int_{\T^n}\left|f_\rho(\bm{\sigma} e^{i\bm{\theta}}) - Q(\bm{\sigma} e^{i\bm{\theta}})\right|^p\, d\bm{\theta} \bigg\}\\
&< 2^p(\epsilon +\epsilon^p\cdot (2\pi)^n)
\end{align*}
by \eqref{E:polys1} and \eqref{E:polys2}. This holds for any $\bm{\sigma}\in\cf$ and $\epsilon >0$ was arbitrary, so the claimed density holds.

\end{proof}

\subsection{Partial sums in $H^1(\cR)$ converge in $A^1(\cR)$}\label{SS:ReinhardtConverge} The definition of Bergman spaces is canonical: $f\in A^p(\mathcal{R})$ if
\[\norm{f}_{A^p(\mathcal{R})}^p=\int_{\mathcal{R}} \abs{f}^p dV<\infty.\]
Theorem \ref{T:main} generalizes to the pair $H^1(\mathcal{R}), A^1(\mathcal{R})$. As in one variable, the key fact is an estimate  on the operator norm of $S_N$.

\begin{proposition}\label{P:reinhardt}
There exists a constant independent of $N\in \Z^{+}$ and $f\in H^1(\mathcal{R})$ such that
$$\norm{S_N f}_{A^1(\mathcal{R})}\lesssim \norm{f}_{H^1(\mathcal{R})}\qquad\forall\, f\in H^1(\mathcal{R}),$$
where $S_Nf$ is the square partial sum \eqref{D:partial}.
\end{proposition}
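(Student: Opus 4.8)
The plan is to adapt the one-variable argument from the proof of Theorem \ref{T:main}, representing each coefficient $b_\alpha$ of $f$ by an iterated Cauchy integral over a distinguished-boundary torus $\bm{R}\cdot\T^n$ and then summing the resulting geometric series in each variable separately. Concretely, fix $\bm{r}\in\cf$ with $\bm{r}\cdot\ol{U^n}\subset\cR$, pick $\bm{R}\in\cf$ with $r_k<R_k$ for each $k$, and write $b_\alpha = (2\pi i)^{-n}\int_{\bm{R}\cdot\T^n} f(\xi)\,\xi^{-\alpha-\bm 1}\,d\xi_1\cdots d\xi_n$. Then
\begin{equation*}
S_Nf(\bm z)=\frac{1}{(2\pi i)^n}\int_{\bm R\cdot\T^n} f(\xi)\prod_{k=1}^n\left(\sum_{j=0}^N\frac{1}{\xi_k}\Big(\frac{z_k}{\xi_k}\Big)^j\right)d\xi_1\cdots d\xi_n
=\frac{1}{(2\pi i)^n}\int_{\bm R\cdot\T^n} f(\xi)\prod_{k=1}^n\frac{1-(z_k/\xi_k)^{N+1}}{\xi_k-z_k}\,d\xi,
\end{equation*}
since the product of partial sums is exactly the square partial sum because $|\alpha|_\infty\le N$ iff $\alpha_k\le N$ for all $k$. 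This factorization of the kernel is the crucial structural point that makes the square partial sum tractable.

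Next I would estimate $\|S_Nf\|_{A^1(\bm r\cdot U^n)}$ by taking absolute values inside, using $|z_k/\xi_k|\le r_k/R_k<1$ to bound each numerator $|1-(z_k/\xi_k)^{N+1}|$ by $2$ uniformly in $N$, and then applying Fubini to swap the $\bm z$-integral over $\bm r\cdot U^n$ with the $\xi$-integral over $\bm R\cdot\T^n$. This reduces matters to showing that
\begin{equation*}
\sup_{\xi\in\bm R\cdot\T^n}\int_{\bm r\cdot U^n}\prod_{k=1}^n\frac{1}{|\xi_k-z_k|}\,dV(\bm z)<\infty,
\end{equation*}
with bound independent of $N$. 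But the region $\bm r\cdot U^n$ is contained in a fixed polydisc and the integrand is a product of one-variable singular factors, so this integral factors as a product of $n$ integrals of the form $\int_{|z_k|<r_k}|\xi_k-z_k|^{-1}\,dV(z_k)$, each bounded by a constant (independent of $\xi_k$ on the circle $|\xi_k|=R_k$) exactly as in the one-variable proof, where one uses $|z_k-\xi_k|^{-1}$ being locally integrable in the plane. Combining gives $\|S_Nf\|_{A^1(\bm r\cdot U^n)}\lesssim \int_{\bm R\cdot\T^n}|f(\xi)|\,d|\xi_1|\cdots d|\xi_n|\approx \int_{\T^n}|f(\bm R\cdot e^{i\bm\theta})|\,d\bm\theta\le \|f\|^p_{H^1(\cR)}$ (with $p=1$), using the alternate form of the Hardy integral recorded after Definition \ref{D:hardy}.

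Finally, I would let $\bm r\to\bm 1$ along the diagonal (or exhaust $\cR$ by the complete Reinhardt subdomains $\bm r\cdot\cR$) and use monotone convergence together with completeness of $\cR$ to pass from $\|S_Nf\|_{A^1(\bm r\cdot U^n)}$ to $\|S_Nf\|_{A^1(\cR)}$, obtaining the claimed uniform bound $\|S_Nf\|_{A^1(\cR)}\lesssim\|f\|_{H^1(\cR)}$. The main obstacle I anticipate is purely bookkeeping: one must choose the exhausting family of tori $\bm R\cdot\T^n$ so that they lie inside $\cR$ and so that, in the limit, the boundary integral $\int_{\T^n}|f(\bm R\cdot e^{i\bm\theta})|\,d\bm\theta$ is controlled by $\|f\|_{H^1(\cR)}$ uniformly; since $\cR$ need not be a polydisc, one should first note that for any $\bm r$ with $\bm r\cdot\ol{U^n}\subset\cR$ there is an admissible $\bm R\in\cf$ with $\bm r\prec\bm R$ coordinatewise, and then invoke Lemma \ref{L:monotone} to dominate the torus integral over $\bm R\cdot\T^n$ by the supremum defining $\|f\|_{H^1(\cR)}$. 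Everything else is a routine product-of-one-variable-estimates argument, and the appeal to Lemma \ref{L:ZhuPlus} with $D$ the holomorphic polynomials (dense in $H^1(\cR)$ by Proposition \ref{P:polysDense}) then upgrades the operator bound to norm convergence, exactly as in one variable.
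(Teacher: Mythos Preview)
Your proposal is correct and follows essentially the same approach as the paper's proof: represent $b_\alpha$ via the iterated Cauchy integral over $\bm{R}\cdot\T^n$, collapse the square partial sum into the product kernel $\prod_k\frac{1-(z_k/\xi_k)^{N+1}}{\xi_k-z_k}$, bound each factor uniformly in $N$, apply Fubini, factor the resulting $\bm z$-integral into one-variable integrals of $|\xi_k-z_k|^{-1}$, and then pass from polydiscs $\bm r\cdot U^n$ to $\cR$. The only cosmetic difference is in the last step, where the paper invokes a sequence $\{\bm r_k\}\subset\cf$ and dominated convergence while you phrase it via monotone convergence and an exhaustion of $\cR$; both are the same bookkeeping and your remarks about choosing $\bm R\succ\bm r$ and invoking Lemma~\ref{L:monotone} match the paper's implicit use of $\bm R\in\cf$.
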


\begin{proof}
Choose $\bm{r}=(r_1,\cdots,r_n)\in \mathcal{F}, \bm{R}=(R_1,\cdots,R_n)\in \mathcal{F}$ such that $r_j< R_j, j=1,\cdots,n$. For $f(z)=\sum_{\alpha}a_{\alpha}z^{\alpha}\in H^1(\mathcal{R})$, the Cauchy integral formula implies
$$a_{\alpha}=\frac{1}{(2\pi i)^n}\int_{\bm{R}\cdot \T^n} \frac{f(\xi)}{\xi_1^{\alpha_1+1} \cdots \xi_n^{\alpha_n +1}} d\xi_1\cdots d\xi_n.$$
On $\bm{r}\cdot U^n$, the $A^1$ norm of $S_Nf$ is
\begin{align*}
\int_{\bm{r}\cdot U^n} \left|\sum_{\abs{\alpha}_{\infty}\leq N}a_{\alpha}z^{\alpha}\right|& dV
=\int_{\bm{r}\cdot U^n}\abs{\sum_{\abs{\alpha}_{\infty}\leq N}\frac{1}{(2\pi i)^n}\int_{\bm{R}\cdot \T^n} \frac{f(\xi)}{\xi_1^{\alpha_1+1} \cdots \xi_n^{\alpha_n +1}} d\xi_1\cdots d\xi_n z^{\alpha} }dV(z)\\
&\lesssim\int_{\bm{r}\cdot U^n}\abs{\int_{\bm{R}\cdot \T^n}f(\xi)\sum_{\abs{\alpha}_{\infty}\leq N}\frac{z^{\alpha}}{\xi^{\alpha}} \frac{1}{\xi_1\cdots \xi_n}d\xi_1\cdots d\xi_n}dV(z)\\
&=\int_{\bm{r}\cdot U^n}\abs{\int_{\bm{R}\cdot \T^n}f(\xi)\sum_{\abs{\alpha_n}\leq N}\cdots \sum_{\abs{\alpha_1}\leq N}\frac{z_1^{\alpha_1}}{\xi_1^{\alpha_1+1}}\cdots \frac{z_n^{\alpha_n}}{\xi_1^{\alpha_n+1}}d\xi_1\cdots d\xi_n}dV(z)\\
&=\int_{\bm{r}\cdot U^n}\abs{\int_{\bm{R}\cdot \T^n}f(\xi)\prod_{j=1}^{n}\frac{1-\big(\frac{z_j}{\xi_j}\big)^{N+1}}{\xi_j-z_j}d\xi_1\cdots d\xi_n}dV(z)=II.
\end{align*}
Since $\xi\in \bm{R}\cdot \T^n$, $z\in \bm{r}\cdot U^n$, $\abs{\frac{z_j}{\xi_j}}<1$ for each $j$. Therefore
\begin{align}\label{E:4}
II&\lesssim\int_{\bm{r}\cdot U^n}\abs{\int_{\bm{R}\cdot \T^n}f(\xi)\prod_{j=1}^n\frac{1}{\xi_j-z_j}d\xi_1\cdots d\xi_n}dV(z)\notag\\
&\leq \int_{\bm{r}\cdot U^n}\int_{\bm{R}\cdot \T^n}\abs{f(\xi)}\prod_{j=1}^{n} \frac{1}{\abs{\xi_j-z_j}} d\abs{\xi_1}\cdots d\abs{\xi_n} dV(z)\notag\\
&=\int_{\bm{R}\cdot \T^n}\abs{f(\xi)}\int_{\bm{r}\cdot U^n}\prod_{j=1}^{n} \frac{1}{\abs{\xi_j-z_j}} dV(z) d\abs{\xi_1}\cdots d\abs{\xi_n}
\end{align}
by Fubini's theorem. 

Since $\mathcal{R}$ is bounded, for $\bm{r}=(r_1,\dots , r_n)\in\mathcal{F}$, the argument that gave \eqref{E:2} shows that for each $r_j$,
$\int_{r_j U} \frac{1}{\abs{\xi_j-z_j}} dV(z_j)\lesssim 1$
for a constant independent of $\xi\in\bm{R}\cdot\T^n$. Consequently
$$\int_{\bm{r}\cdot U^n} \prod_{j=1}^n\frac{1}{\abs{\xi_j-z_j}} dV(z)=\prod_{j=1}^n\int_{r_j U}\frac{1}{\abs{\xi_j-z_j}} dV(z_j)\lesssim 1.$$ 
Inserting this into \eqref{E:4} yields
\[\int_{\bm{r}\cdot U^n} \abs{\sum_{\abs{\alpha}_{\infty}\leq N}a_{\alpha}z^{\alpha}} dV(z)
\lesssim\int_{\bm{R}\cdot \T^n}\abs{f(\xi)}d\abs{\xi}_1\cdots d\abs{\xi_n}
\lesssim \norm{f}_{H^1(\mathcal{R})},
\]
with all constants independent of $N$ and $f$.

Finally note that for any $f\in A^1(\mathcal{R})$ there are sequences $\{\bm{r}_k\}_{k=1}^{\infty}\subseteq \mathcal{F}$ such that  $\norm{f}_{A^1(\mathcal{R})}=\lim_{k\rightarrow \infty}\int_{\bm{r}_k\cdot U^n}\abs{f}dV(z)$, by the dominated convergence theorem. Therefore
\begin{equation*}\label{E:lastBound}
\norm{S_N f}_{A^1(\mathcal{R})}\lesssim \norm{f}_{H^1(\mathcal{R})}
\end{equation*}
as claimed.
\end{proof}

Convergence of $S_Nf$ is $A^1(\cR)$ now follows as in the conclusion to the proof of Theorem \ref{T:main}. Let $T_N=S_N -\text{id}$, $X=H^1(\cR)$, $Y=A^1(\cR)$, and $D=\{\text{holomorphic polynomials}\}$. 
Note $T_N p\equiv 0$ for any $p\in D$ if $N$ is large enough. The hypotheses of Lemma \ref{L:ZhuPlus} are thus satisfied. 
Proposition \ref{P:reinhardt}  and this lemma yield

\begin{corollary} If $f\in H^1(\cR)$ and $S_Nf$ is given by \eqref{D:partial}, then
$$\left\|S_Nf-f\right\|_{A^1(\cR)}\longrightarrow 0\qquad\text{as } N\to\infty.$$
\end{corollary}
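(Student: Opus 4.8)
The statement to prove is the Corollary: for $f\in H^1(\cR)$ with square partial sums $S_Nf$ given by \eqref{D:partial}, one has $\|S_Nf-f\|_{A^1(\cR)}\to 0$. The plan is to imitate verbatim the endgame of the proof of Theorem \ref{T:main}, using the three ingredients already assembled in the excerpt: the uniform operator bound from Proposition \ref{P:reinhardt}, the density of holomorphic polynomials in $H^1(\cR)$ from Proposition \ref{P:polysDense}, and the abstract convergence criterion of Lemma \ref{L:ZhuPlus}.

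First I would set up the abstract machine: take $X=H^1(\cR)$, $Y=A^1(\cR)$ (both Banach spaces for $p=1$), and define the operators $T_N=S_N-\mathrm{id}$. To apply Lemma \ref{L:ZhuPlus} I must check its hypotheses. Each $S_N$ maps $H^1(\cR)$ boundedly into $A^1(\cR)$ — this is exactly the content of Proposition \ref{P:reinhardt}, which also gives a bound on $\|S_N\|_{\mathrm{op}}$ uniform in $N$; hence $T_N=S_N-\mathrm{id}$ is bounded $X\to Y$ (using the elementary embedding $H^1(\cR)\subset A^1(\cR)$, proved exactly as in the disc case by integrating in polar coordinates coordinatewise, so $\mathrm{id}$ is bounded). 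For the dense set $D$ I take the holomorphic polynomials, which are dense in $H^1(\cR)$ by Proposition \ref{P:polysDense}. The crucial observation is that for $p\in D$, if $N$ is at least the largest exponent appearing in any monomial of $p$ — concretely $N\ge |\alpha|_\infty$ for every multi-index $\alpha$ with nonzero coefficient — then the square partial sum $S_Np$ equals $p$, so $T_Np\equiv 0$, and in particular $T_Np\to 0$ in $A^1(\cR)$. Thus both hypotheses of Lemma \ref{L:ZhuPlus} hold.

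Next I would invoke the equivalence in Lemma \ref{L:ZhuPlus}: Proposition \ref{P:reinhardt} furnishes condition (ii), namely $\sup_N\|S_N\|_{\mathrm{op}}<\infty$, which forces $\sup_N\|T_N\|_{\mathrm{op}}<\infty$ as well (triangle inequality with $\|\mathrm{id}\|_{\mathrm{op}}$); feeding this into the lemma yields condition (i) for the family $\{T_N\}$, i.e. $\|T_Nf\|_{A^1(\cR)}\to 0$ for every $f\in X$. Unwinding $T_N=S_N-\mathrm{id}$ gives precisely $\|S_Nf-f\|_{A^1(\cR)}\to 0$, which is the assertion of the Corollary.

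I do not expect a genuine obstacle here: every nontrivial estimate has been absorbed into Proposition \ref{P:reinhardt}, and the rest is the soft functional-analytic packaging already rehearsed for the one-variable case. The only point requiring a moment's care is the bookkeeping that a \emph{square} partial sum $S_N$ annihilates a polynomial once $N$ exceeds its $|\cdot|_\infty$-degree — this is why the square truncation \eqref{D:partial}, rather than some other truncation of the $n$-dimensional lattice of exponents, is the right notion: it makes $D$ a set on which $T_N$ is eventually zero, so that Lemma \ref{L:ZhuPlus} applies with no further work. With that noted, the proof is three lines: cite Proposition \ref{P:reinhardt} for the uniform bound, cite Proposition \ref{P:polysDense} for density and observe $T_Np\equiv 0$ eventually on $D$, and cite Lemma \ref{L:ZhuPlus} to conclude.
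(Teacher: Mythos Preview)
Your proposal is correct and matches the paper's argument essentially verbatim: set $T_N=S_N-\mathrm{id}$, $X=H^1(\cR)$, $Y=A^1(\cR)$, $D=$ holomorphic polynomials, observe $T_Np\equiv 0$ eventually, and combine Proposition~\ref{P:reinhardt} with Lemma~\ref{L:ZhuPlus}. The only detail you add beyond the paper's three-line deduction is the explicit remark that $\mathrm{id}:H^1(\cR)\to A^1(\cR)$ is bounded, which is indeed needed to conclude $\sup_N\|T_N\|_{\mathrm{op}}<\infty$ from $\sup_N\|S_N\|_{\mathrm{op}}<\infty$.
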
 
\medskip


\bibliographystyle{acm}
\bibliography{McnXio18}

\end{document}